\newtheorem{theorem}{Theorem}[section]
\newtheorem{lemma}[theorem]{Lemma}
\theoremstyle{definition}
\newtheorem{definition}[theorem]{Definition}
\theoremstyle{remark}
\numberwithin{equation}{section}
\begin{document}
	
	\title [New convolution related theorems and applications associated with OLCT]{New convolution related theorems and applications associated with offset linear canonical transform} 
	\author{Gita Rani Mahato}
	\author{Sarga Varghese}
	\author{Manab Kundu}
	
	\address{Department of Mathematics, SRM University AP, Amaravati-522240, India}

	\email{\hfill \break
	gitamahato1158$@$gmail.com (Gita Rani Mahato),
	\hfill \break sargavarghese$@$gmail.com (Sarga Varghese),
	\hfill \break manabiitism17$@$gmail.com (Manab Kundu-Corresponding author))}
	
	\thanks{Corresponding author:Manab Kundu}
	\subjclass[2010]{43A32, 44A35, 42A85}
	\date{}

	\keywords{ Fourier transform; Offset linear canonical transform; Convolution; Correlation}
	
	\begin{abstract}
	In this paper, we define new type of convolution and correlation theorems associated with the offset linear canonical transform (OLCT). Additionally, we discuss their applications in multiplicative filter design, which may prove useful in optics and signal processing for signal recovery. Furthermore, we explore the real Paley-Wiener (PW) and Boas theorems for the OLCT, analyzing signal characteristics for OLCT within the $L^2$ domain.
	\end{abstract}

	\maketitle
	
	%%%%%%%%%%%%%%%%%%%%%%%%%%%%%%%%%%%%%%SECTION1	
	
	\section{Introduction}
%The Integral transform is known as Fourier transform. It was . 
The Fourier transform (FT), introduced by Joseph Fourier in the early 19th century, is a widely used technique in signal processing and has been extensively applied across various scientific fields. Several generalizations of the Fourier transform, such as the linear canonical transform (LCT), fractional Fourier transform (FrFT), and offset linear canonical transform (OLCT), provide a greater degree of parameterization compared to the Fourier transform \cite{bdr,alm, gos}.

The offset linear canonical transform (OLCT) \cite{nw2}, a generalization of the LCT, is a time-shifted and frequency-modulated version of the LCT. It is also known as the special affine Fourier transform or the inhomogeneous canonical transform. The OLCT encompasses several important signal processing operators and is widely used in optical system modeling. Many operations, including the time-shifting, scaling, and frequency modulation of FT, FrFT, Fresnel transform (FRST), offset FrFT, LCT are special cases of the OLCT. As the theories of the FrFT and LCT have advanced, the OLCT has become a vital tool in numerous engineering fields. In certain signal processing applications, where the FT, FrFT, or LCT may not be directly applicable, the OLCT proves to be an essential alternative. Therefore, exploring and further developing the theory of the OLCT is both valuable and important \cite{xq, huo, dw2}.

Considering the developments in the study of the offset linear canonical transform (OLCT), several remarkable contributions have been made by many researchers. Verma et al. \cite{verma} established the offset linear canonical wavelet transform (OLCWT) as a continuous linear operator, enhancing the understanding of wavelet transforms in mathematical analysis. Bhat et al. \cite{bhat1, bhat2}  explored the Wigner-Ville distribution (WVD) associated with the quaternion offset linear canonical transform (QOLCT), introducing novel convolution and correlation operators, and emphasizing their role in signal processing with reconstruction formulas and orthogonality relations. Ahmad et al. \cite{ahmad} introduced the WVD based on the biquaternion offset linear canonical transform (WVD-BiQOLCT), extending uncertainty principles and exploring its applications in signal processing and quantum mechanics. Minh \cite{minh} developed the offset fractional Fourier Wigner distribution (OFrWD) and the offset fractional Fourier ambiguity function (OFrAF), deriving key properties and exploring their use in designing multiplicative filters and detecting parameters of linear frequency-modulated signals in the offset fractional Fourier transform domain.

In applied mathematics, two of the most prominent spectrum-related theorems are the Boas and Paley-Wiener (PW) theorems. In 1934, Paley and Wiener \cite{pw12} published the initial version of the PW theorem. This theorem states that a function $f \in L^2 (\mathbb{R}^n)$ is the Fourier transform of a square-integrable function with compact support if and only if it is the restriction to $\mathbb{R}^n$ of an entire function of exponential type. Recently, there has been significant interest in real versions of the PW and Boas theorems, discovered by Bang and Tuan \cite{bang, tuan, pw1, pw2, pw8}, respectively. These theorems characterize band-limited and high-pass signals using derivative and integral operators. Numerous studies have focused on extending these theories to higher dimensions and exploring their applications with other integral transforms \cite{pw3, pw4, pw5, pw6, pw7, pw9, pw10, pw11}.
	
Motivated by the characteristics of convolution, correlation, and other properties of the OLCT, we have introduced a novel approach to convolution and correlation associated with the OLCT. Additionally, we have investigated various spectrum-related principles connected to the OLCT. Furthermore, several applications of the OLCT have been explored.
	
The organization of this work is as follows: Section 2 provides a brief overview of the fundamental concepts of the FT, OLCT, convolution, and correlation. In Section 3, we establish new theorems related to convolution and correlation in the context of the OLCT. Section 4 focuses on spectral theories associated with the OLCT, while Section 5 explores potential applications. Finally, Section 6 presents the concluding remarks.

	%%%%%%%%%%%%%%%%%%%%%%%%%%%%%%%%%%%%%%SECTION2
	
	\section{Preliminaries} 
	This section deals with some elementary concepts and notations that will be helpful in the current study.
	
	\begin{definition}
		Let the function $f \in L^1(\mathbb{R})$. Then we can  define the FT of $f$ as 
		\begin{eqnarray}
			({F}f)(u) = \frac{1}{\sqrt{2\pi}} \int_{\mathbb{R}} e^{-ixu}f(x)dx,\hspace{3mm} \forall u\in \mathbb{R}.
		\end{eqnarray}
		inverse Fourier transform can be written as
		\begin{eqnarray}
			f(x)=\frac{1}{\sqrt{2\pi}} \int_{\mathbb{R}} e^{ixu}({F}f)(u)dx,\hspace{3mm} \forall x\in \mathbb{R}.
		\end{eqnarray}
	\end{definition}
	\begin{definition}
		The LCT of $f \in L^1(\mathbb{R})$ with the parameters in terms of a unimodular matrix $M = 
		\begin{pmatrix}
			a & b\\
			c & d
		\end{pmatrix}, (ad-bc=1)$ is defined as 
		\begin{eqnarray}
		\small
		\mathcal{L}_M (u)=	
		\begin{cases}
		\sqrt{\frac{1}{2\pi i b}}	\int_{\mathbb{R}}f(x)~e^{\frac{i}{2b}(a x^2 - 2xu  + du^2  )}dx & \hspace{-4mm}, ~~ b\neq0\\
		\frac{1}{\sqrt{a}}	~e^{i\frac{c}{2a}(u)^2}\delta(x-\frac{u}{a})& \hspace{-0.3cm}, ~~b=0, 
		\end{cases}
	\end{eqnarray}
		
	\end{definition}
	
	\begin{definition}\label{def1}
		For a given set of parameters $M=\{ a,b,c,d,u_0,\omega_0\}$, $b\neq 0$, the offset linear canonical  transform (OLCT) of a function $f \in L^1(\mathbb{R}) \cap L^2(\mathbb{R})$ can be defined as\cite{abe} 
		\begin{eqnarray}
			\small
			F_M (u)= O^M[f(x)](u)=
			\begin{cases}
				\int_{\mathbb{R}}f(x)\mathcal{K}_M(x,u)dx & \hspace{-4mm}, ~~ b\neq0\\
				\sqrt{d}~e^{i\frac{cd}{2}(u-u_0)^2+i\omega_0u}f[d(u-u_0)] & \hspace{-0.3cm}, ~~b=0, 
			\end{cases}
		\end{eqnarray}
		\begin{eqnarray}
			\mathcal{O}^{M}\big[f(t)\big] (u)=  {F_M}(u)=  \int_{\mathbb{R}} \mathcal{K}_{M}(u,x) f(x) dx, \hspace{3mm} \omega\in \mathbb{R},
		\end{eqnarray}
		where
		\begin{eqnarray}
			\mathcal{K}_{M} (u, x)=  \sqrt{\frac{1}{2\pi i b}}~e^{\frac{i}{2b}(du_0^2)}~e^{\frac{i}{2b}(a x^2 + 2  x (u_0-u) - 2 u(du_0-b\omega_0) + du^2  )}
		\end{eqnarray}
		is the kernel of OLCT and	$ a, b, c, d, u_0, \omega_0 \in \mathbb{R}$.
		Then its  inversion transformation can be defined as 
		\begin{eqnarray}
			f(x)=  \Big(\mathcal{O}^{M^{-1}} (F_{M} f)\Big)(x)= C \int_{\mathbb{R}}  \mathcal{K}_{M^{-1}} (x, u) (\mathcal{O}^{M} f)(u) du,
		\end{eqnarray}
		where parameter 
		\begin{eqnarray*}
			M^{-1}=(d,-b,-c,a,b\omega_0-bu_0)  
			and \\ C= e^{\frac{i}{b}(cdu_0^2-2adu_0\omega_0+ab\omega_0^2)}.
		\end{eqnarray*}
	\end{definition}
	
	\begin{lemma}(Reimann-Lebesgue)\label{lem1} 
		If $f \in L^1(\mathbb{R})$, then $\mathcal{O}^{M}(u)  \in \mathcal{C}_0(\mathbb{R})$.
		\\i.e., 
		\begin{eqnarray}
			(\mathcal{O}^{M} f)(u) \to 0 \;\;\;\; as \;\;\;\; |u| \to \infty.
		\end{eqnarray}		
	\end{lemma}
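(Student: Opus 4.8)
\emph{Proof proposal.} The plan is to reduce the statement to the classical Riemann--Lebesgue lemma for the Fourier transform by peeling off the unimodular chirp factors from the OLCT kernel. First I would expand the argument of the second exponential in $\mathcal{K}_M(u,x)$ and separate the terms that depend on $x$ from those depending only on $u$: the latter, together with the constant $\sqrt{\tfrac{1}{2\pi i b}}$, assemble into a factor
\begin{eqnarray*}
A(u) := \sqrt{\tfrac{1}{2\pi i b}}\; e^{\frac{i d u_0^2}{2b}}\; e^{\frac{i}{2b}\left(d u^2 - 2u(d u_0 - b\omega_0)\right)},
\end{eqnarray*}
which can be pulled outside the integral. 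Since $a,b,c,d,u_0,\omega_0$ are real, every exponential here has purely imaginary argument, so $|A(u)| = \tfrac{1}{\sqrt{2\pi|b|}}$ for all $u\in\mathbb{R}$, and $A$ is clearly continuous in $u$.

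Next I would set $g(x) := f(x)\, e^{\frac{i a x^2}{2b}}\, e^{\frac{i x u_0}{b}}$. Because the modulating factor has modulus $1$ and $f\in L^1(\mathbb{R})$, we get $g\in L^1(\mathbb{R})$ with $\|g\|_1=\|f\|_1$. The remaining integral is then exactly $\int_{\mathbb{R}} g(x)\, e^{-\frac{ixu}{b}}\,dx = \sqrt{2\pi}\,(Fg)\!\left(\tfrac{u}{b}\right)$, a dilation of the ordinary Fourier transform of the $L^1$ function $g$. Hence $(\mathcal{O}^M f)(u) = \sqrt{2\pi}\, A(u)\, (Fg)\!\left(\tfrac{u}{b}\right)$.

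Then I would invoke the two standard facts for $L^1$ functions: $Fg$ is continuous on $\mathbb{R}$, and $(Fg)(v)\to 0$ as $|v|\to\infty$. Since $b\neq 0$, the map $u\mapsto u/b$ is a homeomorphism of $\mathbb{R}$ that carries $|u|\to\infty$ to $|v|\to\infty$; combined with the uniform bound on $|A(u)|$ this yields $|(\mathcal{O}^M f)(u)| = \tfrac{1}{\sqrt{|b|}}\,\big|(Fg)(u/b)\big| \to 0$ as $|u|\to\infty$, while continuity of $\mathcal{O}^M f$ follows from continuity of $A$ and of $v\mapsto (Fg)(v)$. Thus $\mathcal{O}^M f \in \mathcal{C}_0(\mathbb{R})$.

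The main point requiring care is purely the algebraic bookkeeping: correctly isolating the genuinely $u$-dependent unimodular factors from the integrand and tracking the dilation $u\mapsto u/b$, noting that the hypothesis $b\neq 0$ is precisely what makes this dilation a proper change of variable. Once that is done, the classical Riemann--Lebesgue lemma and the continuity of the Fourier transform of an $L^1$ function supply all the analytic content, so I do not expect any substantial obstacle.
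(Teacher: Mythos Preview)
Your proposal is correct and follows essentially the same route as the paper: factor out the unimodular $u$-dependent chirp, absorb the $x$-dependent chirp and the $u_0$-modulation into $g(x)=f(x)e^{\frac{i}{2b}(ax^2+2xu_0)}\in L^1(\mathbb{R})$, and recognize what remains as the ordinary Fourier transform of $g$ evaluated at $u/b$, so that the classical Riemann--Lebesgue lemma applies. Your write-up is in fact slightly more complete than the paper's, since you also argue continuity and make the dilation $u\mapsto u/b$ explicit.
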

	\begin{proof}
		let $f\in L^1(\mathbb{R})$
		\begin{eqnarray*}
			|O^M{f}(u)|&=& \bigg|\sqrt{\frac{1}{{2\pi i b}}}e^{\frac{i}{2b}du^2}\int_{\mathbb{R}}e^{\frac{i}{2b}(ax^2+2x(u_0-u)-2u(du_0-b\omega_0)+du^2)f(t)dt}\bigg|
			\\&\leq&\bigg|\sqrt{\frac{1}{2\pi i b}}\bigg|\bigg| e^{\frac{i}{2b}du_0^2-2u(du_0-b\omega_0)+du^2}\bigg|\bigg|\int_{\mathbb{R}}e^{\frac{i}{2b}(ax^2+2x(u_0-u))}f(x)dx\bigg|
			\\&\leq& \sqrt{\frac{1}{2\pi b}}\bigg|\int_{\mathbb{R}}e^{\frac{i}{2b}(ax^2+2x(u_0-u))}f(x)dx\bigg|
			\\&=&\sqrt{\frac{1}{2\pi b}}\bigg|\int_{\mathbb{R}}e^{\frac{-i}{b}ux}g(x)d(x)\bigg|, \text{ where } g(x)=e^{\frac{i}{2b}(ax^2+2xu_0)}
			\\&\to& 0 \text{ as } u \to \infty,
		\end{eqnarray*}
		by classic Riemann Lebesgue lemma for Fourier series.
	\end{proof}
	
	\begin{theorem}\label{nm}
		The inner product of OLCT of $f,g \in L^2 (\mathbb{R})$ is same as the inner product of $f$ and $g$. 
		i.e.,
		\begin{eqnarray*}
			\big< f, g \big> &=& \big< \mathcal{O}^{M}f , \mathcal{O}^{M}g\big>.
		\end{eqnarray*}	
		If we take $f=g$, the equation takes the particular form as
		\begin{eqnarray}
			||f||_2^2 &=& ||\mathcal{O}^{M} f||_2^2. 
		\end{eqnarray}
	\end{theorem}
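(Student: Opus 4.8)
The plan is to reduce the OLCT to the ordinary Fourier transform by stripping off a chirp modulation and a scaling, and then invoke the classical Plancherel relation. First I would rewrite, for $b\neq 0$ and $f\in L^1(\mathbb{R})\cap L^2(\mathbb{R})$,
\begin{equation*}
F_M(u)=\sqrt{\frac{1}{2\pi i b}}\;e^{i\phi(u)}\int_{\mathbb{R}} f(x)\,e^{\frac{i}{2b}(ax^2+2xu_0)}\,e^{-\frac{i}{b}xu}\,dx,
\end{equation*}
where $\phi(u)=\frac{1}{2b}\big(du_0^2-2u(du_0-b\omega_0)+du^2\big)$ is real-valued. Setting $h_f(x)=f(x)\,e^{\frac{i}{2b}(ax^2+2xu_0)}$, which satisfies $|h_f(x)|=|f(x)|$ and hence $h_f\in L^1(\mathbb{R})\cap L^2(\mathbb{R})$, the integral above is exactly $\sqrt{2\pi}\,(Fh_f)(u/b)$. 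Thus $F_M(u)=\sqrt{\frac{1}{ib}}\,e^{i\phi(u)}(Fh_f)(u/b)$, and likewise $G_M(u)=\sqrt{\frac{1}{ib}}\,e^{i\phi(u)}(Fh_g)(u/b)$ with the \emph{same} unimodular chirp $e^{i\phi(u)}$.

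Next I would form the pointwise product $F_M(u)\overline{G_M(u)}$: the factors $e^{i\phi(u)}$ and $\overline{e^{i\phi(u)}}$ cancel and $\sqrt{\frac{1}{ib}}\,\overline{\sqrt{\frac{1}{ib}}}=\frac{1}{|b|}$, so
\begin{equation*}
F_M(u)\overline{G_M(u)}=\frac{1}{|b|}\,(Fh_f)(u/b)\,\overline{(Fh_g)(u/b)}.
\end{equation*}
Integrating in $u$ and substituting $v=u/b$ clears the factor $1/|b|$, giving $\big\langle \mathcal{O}^{M}f,\mathcal{O}^{M}g\big\rangle=\big\langle Fh_f,Fh_g\big\rangle$. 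The classical Parseval relation for the Fourier transform then yields $\big\langle Fh_f,Fh_g\big\rangle=\big\langle h_f,h_g\big\rangle$, and since $h_f(x)\overline{h_g(x)}=f(x)\,e^{\frac{i}{2b}(ax^2+2xu_0)}\,\overline{g(x)}\,e^{-\frac{i}{2b}(ax^2+2xu_0)}=f(x)\overline{g(x)}$, we conclude $\big\langle h_f,h_g\big\rangle=\big\langle f,g\big\rangle$. Taking $f=g$ specializes this to $\|f\|_2^2=\|\mathcal{O}^{M}f\|_2^2$.

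Two points need a little care. The statement is for all $f,g\in L^2(\mathbb{R})$, whereas the OLCT of Definition \ref{def1} is initially defined on $L^1(\mathbb{R})\cap L^2(\mathbb{R})$; so I would first prove the identity on that dense subspace by the argument above, and then extend it to $L^2(\mathbb{R})$ by continuity, which simultaneously shows that $\mathcal{O}^{M}$ has a unique unitary extension to $L^2(\mathbb{R})$. One must also justify the change of variables and the invocation of Parseval, but this is routine once $h_f,h_g\in L^1\cap L^2$ is observed. I expect the only genuine obstacle to be bookkeeping: carrying the real chirp $\phi(u)$ and the quadratic phase inside $h_f$ carefully enough that all unimodular factors visibly cancel — the branch chosen for $\sqrt{1/(2\pi i b)}$ is irrelevant here precisely because only $\big|\sqrt{1/(ib)}\big|^2=1/|b|$ enters. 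An alternative route is to expand $\big\langle\mathcal{O}^{M}f,\mathcal{O}^{M}g\big\rangle$ as a triple integral and perform the $u$-integration first, using the reproducing identity $\int_{\mathbb{R}}\mathcal{K}_M(u,x)\overline{\mathcal{K}_M(u,y)}\,du=\delta(x-y)$ implicit in the inversion formula of Definition \ref{def1}; I would keep this as a fallback, since it demands more care with the interchange of integrations.
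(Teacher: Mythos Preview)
Your argument is correct and is the standard route to unitarity of the OLCT: factor the kernel as a unimodular chirp in $u$ times a Fourier kernel applied to a unimodular chirp multiple of $f$, then invoke Plancherel; the density extension to all of $L^2(\mathbb{R})$ and the remark that only $\bigl|\sqrt{1/(ib)}\bigr|^2=1/|b|$ enters are both on point.

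There is nothing to compare against, however: the paper states Theorem~\ref{nm} without proof, evidently treating the Parseval relation for the OLCT as a known background fact (it is used later as an ingredient in the proofs of the real Paley--Wiener and Boas theorems). Your proposal therefore supplies a proof the paper omits rather than reproducing or diverging from one it gives.
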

	
	\begin{definition}
		For the functions $f, g \in L^1 (\mathbb{R})$, we can define convolution by 
		\begin{eqnarray}
			(f * g)(x) = f(x) * g(x) = \int_{\mathbb{R}} f(\tau) g(x-\tau) d\tau,
		\end{eqnarray}
		which satisfies
		\begin{eqnarray}
			\hat{(f * g)}(u) = \sqrt{2 \pi} (\hat{ f})(u) (\hat{ g})(u),
		\end{eqnarray}
		where $\hat{ f}$ denotes the Fourier transform of $f$.
	\end{definition}

	%%%%%%%%%%%%%%%%%%%%%%%%%%%%%%%%%%%%%%SECTION3

	\section{Convolution and correlation}
	
	In this Section, we discuss the generalized version of convolution and correlation  \cite{zc, cas} for the OLCT which are generalized form of convolution and correlation associated with FT.	
	\subsection{OLCT convolution operation}
	\begin{definition}\label{def2}
		Let $f(x), g(x) \in L^1({\mathbb{R}})$. Then the convolution for OLCT can be expressed as follows
		\begin{eqnarray}
			(f \bigoplus g)(x) = \sqrt{\frac{1}{2\pi i b}}~e^{\frac{i}{2b} du_0^2} \int_{\mathbb{R}} f(\tau) g(\frac{x}{2}-\tau) e^{{\frac{-i}{2 b}}[a\{(\frac{x}{2}  +2 \tau)^2 -6\tau^2  +\frac{ x^2}{2}\} +xu_0]} d\tau.\label{1}
		\end{eqnarray}
	\end{definition}
	
	\begin{theorem}
		Let the functions $f(x), g(x) \in L^1({\mathbb{R}})$. Then 
		\begin{eqnarray*}
			f \bigoplus g \in L^1(\mathbb{R}) ~~~~and~~~~ ||f \bigoplus g||_1 \leq \sqrt{\frac{4}{2\pi b }} ||f||_1 ||g||_1.
		\end{eqnarray*}
	\end{theorem}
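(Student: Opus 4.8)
The plan is to estimate the $L^1$ norm of $f \bigoplus g$ directly from the defining integral \eqref{1} by pulling the modulus inside both integrals and exploiting that all the exponential factors have modulus one. Concretely, I would write
\[
\|f \bigoplus g\|_1 = \int_{\mathbb{R}} \left| (f \bigoplus g)(x) \right| dx \le \sqrt{\frac{1}{2\pi b}} \int_{\mathbb{R}} \int_{\mathbb{R}} |f(\tau)|\, \left| g\!\left(\tfrac{x}{2} - \tau\right) \right| d\tau\, dx,
\]
since $\left| \sqrt{1/(2\pi i b)} \right| = \sqrt{1/(2\pi b)}$, $\left| e^{\frac{i}{2b} du_0^2} \right| = 1$, and the big exponential in the integrand is purely imaginary in its exponent (its argument is real), hence has modulus one. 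The finiteness of the double integral and the legitimacy of interchanging the order of integration will be justified by Tonelli's theorem, because the integrand is nonnegative and measurable.

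Next I would carry out the iterated integral. Applying Fubini/Tonelli to integrate in $x$ first, for each fixed $\tau$ I substitute $y = \tfrac{x}{2} - \tau$, so $dy = \tfrac{1}{2}\, dx$, i.e. $dx = 2\, dy$, and the inner integral becomes $\int_{\mathbb{R}} \left| g\!\left(\tfrac{x}{2}-\tau\right)\right| dx = 2 \int_{\mathbb{R}} |g(y)|\, dy = 2\|g\|_1$. The remaining integral in $\tau$ then contributes $\int_{\mathbb{R}} |f(\tau)|\, d\tau = \|f\|_1$. Collecting the constants gives
\[
\|f \bigoplus g\|_1 \le \sqrt{\frac{1}{2\pi b}} \cdot 2 \|f\|_1 \|g\|_1 = \sqrt{\frac{4}{2\pi b}}\, \|f\|_1 \|g\|_1,
\]
which is exactly the claimed bound; in particular the right-hand side is finite, so $f \bigoplus g \in L^1(\mathbb{R})$.

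There is no deep obstacle here — the argument is essentially the classical Young-type estimate for ordinary convolution adapted to the extra chirp factors. The one point that genuinely requires care, and which I would treat as the "main obstacle," is verifying that the exponent
\[
\frac{-i}{2b}\Big[a\big\{(\tfrac{x}{2}+2\tau)^2 - 6\tau^2 + \tfrac{x^2}{2}\big\} + x u_0\Big]
\]
is purely imaginary, i.e. that the bracketed quantity is real for real $x, \tau$ and real parameters $a, b, u_0$; this is immediate by inspection but must be stated, since it is the only reason the modulus of the kernel reduces to the constant $\sqrt{1/(2\pi b)}$. I would also note explicitly (for rigor, though it is routine) that the measurability of $(x,\tau)\mapsto f(\tau) g(\tfrac{x}{2}-\tau)$ on $\mathbb{R}^2$ follows from the measurability of $f$ and $g$ together with continuity of the maps $(x,\tau)\mapsto \tau$ and $(x,\tau)\mapsto \tfrac{x}{2}-\tau$, so that Tonelli applies. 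Everything else is a direct computation.
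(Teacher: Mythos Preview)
Your proposal is correct and follows essentially the same route as the paper: bound the modulus of the chirp factors by $1$, apply Tonelli/Fubini, and perform the substitution $y=\tfrac{x}{2}-\tau$ in the inner integral to pick up the factor $2$. Your write-up is in fact more careful than the paper's (you make the modulus computation, the substitution, and the Tonelli justification explicit), but the underlying argument is identical.
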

	\begin{proof}
		Using the definition \ref{def1} and Fubini's theorem, we get
		\begin{eqnarray*}
			&&\int_{\mathbb{R}}|f \bigoplus g(x)|dx 
			\\&&= \int_{\mathbb{R}}\bigg{|}\sqrt{\frac{1}{ 2\pi b i}}~e^{\frac{i}{2b}du_0^2} \int_{\mathbb{R}} f(\tau) g(\frac{x}{2}-\tau) e^{\frac{-i}{2b}[\{(\frac{1}{2} x +2 \tau)^2 -6\tau^2 + \frac{x^2}{2}\} +xu_0]} d\tau \bigg{|}dx
			\\&&\leq \sqrt{\frac{1}{ 2\pi b }} \int_{\mathbb{R}} \int_{\mathbb{R}} \bigg{|}f(\tau) g(\frac{x}{2}-\tau) e^{\frac{-i}{2b}[a\{(\frac{1}{2} x +2 \tau)^2 -6\tau^2 + \frac{x^2}{2}\} +xu_0]}\bigg{|} d\tau dx
			\\&&= \sqrt{\frac{1}{ 2\pi b }} \int_{\mathbb{R}} \big{|}f(\tau) \big{|} \bigg[ \int_{\mathbb{R}} \bigg{|} g(\frac{x}{2}-\tau)\bigg{|}dx \bigg] d\tau 
			\\&&= \sqrt{\frac{4}{2\pi b }} ||f||_1 ||g||_1. 
		\end{eqnarray*}
		This completes the theorem.	
	\end{proof}
	
	\begin{theorem}
		Let $\mathcal{O}^{M} f$, $\mathcal{O}^{M} g$ be the OLCT of $f(x)$ and $g(x)$ respectively, where $f(x), g(x) \in L^2(\mathbb{R})$. Then 
		\begin{eqnarray}
			\mathcal{O}^{M} (f \bigoplus g) (u) =2\mathcal{T}(u) \mathcal{O}^{M} f (2u) \mathcal{O}^{M} g(2u), \label{2}
		\end{eqnarray}
		where
		\begin{eqnarray}
			\mathcal{T}(u)=e^{\frac{-i}{2b}(6u\{du_0-b\omega_0\} + 7du^2)}.
		\end{eqnarray}
	\end{theorem}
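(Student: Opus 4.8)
The plan is to unfold both the OLCT and the convolution into a single double integral, interchange the order of integration, and then perform the substitution already built into Definition~\ref{def2}, namely $y=\frac{x}{2}-\tau$. After this substitution the total phase should split into a piece carrying the kernel $\mathcal{K}_{M}(\tau,2u)$, a piece carrying $\mathcal{K}_{M}(y,2u)$, and a leftover chirp depending on $u$ alone, which is precisely $\mathcal{T}(u)$.

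First I would write
\[
\mathcal{O}^{M}(f\bigoplus g)(u)=\int_{\mathbb{R}}(f\bigoplus g)(x)\,\mathcal{K}_{M}(x,u)\,dx,
\]
insert the expression from Definition~\ref{def2}, and use that the only $x$-dependence of $\mathcal{K}_{M}(x,u)$ beyond the constant $\sqrt{\frac{1}{2\pi i b}}\,e^{\frac{i}{2b}du_0^2}$ sits in the chirp $e^{\frac{i}{2b}(ax^2+2x(u_0-u)-2u(du_0-b\omega_0)+du^2)}$. By Fubini's theorem --- valid on $L^1(\mathbb{R})$ by the preceding theorem, and extended to $L^2(\mathbb{R})$ by first arguing on the dense subspace $L^1(\mathbb{R})\cap L^2(\mathbb{R})$ and passing to the limit via the isometry of Theorem~\ref{nm} --- the $\tau$-integral can be pulled outside. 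This leaves a double integral in $(\tau,x)$ whose integrand is $f(\tau)\,g(\frac{x}{2}-\tau)$ times one exponential $e^{\frac{i}{2b}E(x,\tau,u)}$, where $E$ is the sum of the convolution phase $-[a\{(\frac{x}{2}+2\tau)^2-6\tau^2+\frac{x^2}{2}\}+xu_0]$ and the kernel phase just displayed.

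Next I would substitute $x=2(y+\tau)$, $dx=2\,dy$. The heart of the argument is the algebraic identity that, under this substitution, $E(2(y+\tau),\tau,u)$ equals the phase of $\mathcal{K}_{M}(\tau,2u)$ plus the phase of $\mathcal{K}_{M}(y,2u)$ plus $-\left(6u(du_0-b\omega_0)+7du^2\right)$; concretely, after expanding $(\frac{x}{2}+2\tau)^2=(y+3\tau)^2$, $\frac{x^2}{2}=2(y+\tau)^2$ and $ax^2=4a(y+\tau)^2$, the mixed $y\tau$ terms and the surplus $\tau^2$ terms must cancel (the summand $-6\tau^2$ is present for exactly this reason), leaving an exponent that is separable in $\tau$ and $y$ modulo a function of $u$. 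Granting this, the $\tau$-integral reconstitutes $\mathcal{O}^{M}f(2u)$, the $y$-integral reconstitutes $\mathcal{O}^{M}g(2u)$, and the residual factor $e^{-\frac{i}{2b}(6u(du_0-b\omega_0)+7du^2)}$ is $\mathcal{T}(u)$. It then remains to match the constants: the convolution supplies one $\sqrt{\frac{1}{2\pi i b}}\,e^{\frac{i}{2b}du_0^2}$, the outer OLCT a second, the Jacobian a factor $2$, and the two reconstituted OLCTs each carry a $\sqrt{\frac{1}{2\pi i b}}\,e^{\frac{i}{2b}du_0^2}$; after cancellation exactly the factor $2$ multiplying $\mathcal{T}(u)$ survives, which gives \eqref{2}.

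I expect the bookkeeping of the quadratic phases to be the only genuinely delicate step: one must expand several squares at $x=2(y+\tau)$ and verify that every $y\tau$ cross term, every surplus $\tau^2$, and (for the offset parameters) every stray linear term cancels, so that $E$ really does factor. This is the point at which the specific coefficients in Definition~\ref{def2} and the dilation by $2$ in the arguments of $\mathcal{O}^{M}f$ and $\mathcal{O}^{M}g$ are forced. The remaining steps --- Fubini, the linear change of variables, and collecting the surviving $u$-chirp into $\mathcal{T}(u)$ --- are routine once that cancellation is checked.
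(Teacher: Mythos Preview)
Your proposal is correct and follows essentially the same route as the paper: write $\mathcal{O}^{M}(f\bigoplus g)(u)$ as a double integral, make the substitution $\xi=\frac{x}{2}-\tau$ (your $y$), and verify that the combined phase splits into $\mathcal{K}_{M}(\tau,2u)+\mathcal{K}_{M}(\xi,2u)$ plus the residual $u$-chirp $\mathcal{T}(u)$, with the Jacobian supplying the factor $2$. The only addition you make is the density/isometry remark to justify the $L^2$ case, which the paper leaves implicit.
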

	\begin{proof}
		By using the definitions \ref{def1} and \ref{def2},
		\begin{eqnarray*}
			&&	\mathcal{O}^{M} (f\bigoplus g)(u) \\&=&\hspace{-3mm} \sqrt{\frac{1}{2\pi i b}} \int_{\mathbb{R}} \mathcal{K}_{M}(u,x) (f\bigoplus g) (x) dx 
			%\\&=& \sqrt{\frac{b}{2\pi i}} \int_{\mathbb{R}}e^{i(ax^2+bx\omega+c\omega^2+dx+e\omega)} (f\bigoplus g) (x) dx
			\\&=& \hspace{-3mm} \sqrt{\frac{1}{2\pi i b}}~e^{\frac{i}{2b}du_0^2}\int_{\mathbb{R}} e^{\frac{i}{2b}(ax^2+2x(u_0-u)-2u(du_0-b\omega_0)+du^2)} \\&&\times \sqrt{\frac{1}{2\pi ib}}~e^{\frac{i}{2b} du_0^2} \int_{\mathbb{R}} f(\tau) g(\frac{x}{2}-\tau) e^{{\frac{-i}{2 b}}[a\{(\frac{x}{2}  +2 \tau)^2 -6\tau^2  +\frac{ x^2}{2} \} +xu_0]} d\tau.dx
			%\\ &=& \hspace{-3mm} \sqrt{\frac{b}{2\pi i}} \sqrt{\frac{b}{\pi i}}\int_{\mathbb{R}}\int_{\mathbb{R}}\hspace{-1.5mm} e^{i(ax^2+bx\omega+c\omega^2+dx+e\omega-\frac{a}{12}\{(3 x - 2\tau)^2 -28\tau^2\} -\frac{x}{2}d)}   f(\tau) g(\frac{t}{2}-\tau) d\tau dx		
		\end{eqnarray*}
		Changing the variable $\frac{x}{2}-\tau = \xi$, we get
		\begin{eqnarray*}
			&&	\mathcal{O}^{M} (f\bigoplus g)(u) 
			\\&=& \hspace{-3mm} 2\sqrt{\frac{1}{2\pi i b}} \sqrt{\frac{1}{2\pi b i}}\int_{\mathbb{R}}\int_{\mathbb{R}} \big[e^{\frac{i}{2b}\big[a(\tau+\xi)^2+4\tau(u_0-u)+4\xi(u_0-u)-2u(du_0-b\omega_0)+du^2-2a\tau^2-2a\tau\xi-2\tau u_0-2\xi u_0\big]}f(\tau)g(\xi)\big] d\tau d\xi	
\\&=&\hspace{-3mm} 2\sqrt{\frac{1}{2\pi i b}} \sqrt{\frac{1}{2\pi i b}}\int_{\mathbb{R}}\int_{\mathbb{R}} e^{\frac{i}{2b}(a\tau^2+2\tau(u_0-2u)+a\xi^2+2\xi(u_0-2u)-2u(du_0-b\omega_0)+du^2 )}   f(\tau) g(\xi) d\tau d\xi
\\&=& \hspace{-3mm} 2 \sqrt{\frac{1}{2\pi i b}} \sqrt{\frac{1}{2\pi i b}}\int_{\mathbb{R}}\int_{\mathbb{R}} \hspace{-1.5mm}\big[ e^{\frac{i}{2b}(a\tau^2+2\tau(u_0-2u)-4u^2+4du^2+a\xi^2+2\xi(u_0-2u)-4u(du_0-b\omega_0)+4du^2)} \\&&  e^{\frac{-i}{2b}(6u(du_0-b\omega_0)+7du^2)}  f(\tau) g(\xi) \big] d\tau d\xi	
\\&=& \hspace{-3mm} 2 \sqrt{\frac{1}{2\pi i b}} \sqrt{\frac{1}{2\pi i b}}\int_{\mathbb{R}}\int_{\mathbb{R}} \mathcal{K}_{M} f(2\tau)  \mathcal{K}_{M}   g(2\xi) e^{\frac{-i}{2b}(6u(du_0-b\omega_0))}f(\tau) g(\xi) d\tau d\xi	
			\\&=& \hspace{-3mm} 2  e^{\frac{-i}{2b}(6u(du_0-b\omega_0)+7du^2)} \mathcal{O}^{M} f(2u) \mathcal{O}^{M} g(2u).	
		\end{eqnarray*}
		Hence the theorem is proved.
	\end{proof}

	\subsection{OLCT correlation operation}
	\begin{definition}\label{def3}
		Let the functions $f(x), g(x) \in L^1({\mathbb{R}})$. The correlation operation for OLCT is defined as follows
		\begin{eqnarray}
			(f \bigotimes g)(x) 
			\hspace{-3mm}&=&\hspace{-3mm}\sqrt{\frac{i}{2\pi b}}C \int_{\mathbb{R}} f(\tau) 	g(\frac{x}{2}-\tau) e^{\frac{i}{2b}[d(\frac{3x^2}{4}+\tau x-2\tau^2)-4(du_0-b\omega_0)]} d\tau.
		\end{eqnarray}
		where
		\begin{eqnarray}
			C= e^{\frac{i}{2}\big[(cdu_0^2-2adu_0\omega_0+ab\omega_0^2)\big]}
		\end{eqnarray}
	\end{definition}
	
	\begin{theorem}
		Let the functions $f(x), g(x) \in L^2(\mathbb{R})$. $F_M(u)=\mathcal{O}^{M} f(u)$ and $G_M(u)=\mathcal{O}^{M} g(u)$ denote the OLCT of $f(x)$ and $g(x)$ respectively. Then 
		\begin{eqnarray}
			\mathcal{O}^{M} \big[\mathcal{T}(x) f(2x)g(2x)\big](\omega)= F_M\bigotimes G_M(u).
		\end{eqnarray}
		
	\end{theorem}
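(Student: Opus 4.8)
The plan is to verify the identity by starting from the right-hand side $F_M \bigotimes G_M(u)$ and unfolding all definitions until the left-hand side $\mathcal{O}^{M}\big[\mathcal{T}(x) f(2x)g(2x)\big](u)$ emerges. Conceptually this is the ``product theorem'' dual to the convolution theorem proved above: in the classical Fourier setting the correlation of two spectra is the spectrum of a (suitably modulated) product, and we are checking the OLCT analogue. One could instead try to deduce it from the convolution theorem, but since the correlation kernel in Definition \ref{def3} is built out of the parameters of $M^{-1}$ (note the constant $C$ and the $d$'s in place of $a$'s) rather than of $M$, a direct unfolding is cleaner.

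First I would apply Definition \ref{def3} with $f,g$ replaced by $F_M,G_M$ and the free variable taken to be $u$, writing $F_M \bigotimes G_M(u)$ as $\sqrt{\tfrac{i}{2\pi b}}\,C\int_{\mathbb{R}} F_M(\tau)\,G_M\!\big(\tfrac{u}{2}-\tau\big)\,e^{\frac{i}{2b}[d(\frac{3u^2}{4}+\tau u-2\tau^2)-4u(du_0-b\omega_0)]}\,d\tau$. Then I would substitute the defining integrals $F_M(\tau)=\int_{\mathbb{R}}\mathcal{K}_M(\tau,s)f(s)\,ds$ and $G_M\!\big(\tfrac{u}{2}-\tau\big)=\int_{\mathbb{R}}\mathcal{K}_M\!\big(\tfrac{u}{2}-\tau,t\big)g(t)\,dt$ from Definition \ref{def1}. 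By Fubini's theorem (legitimate first on $L^1\cap L^2$ and then extended by density, exactly as in the earlier theorems) this gives a triple integral in $\tau,s,t$. The decisive point is to collect every $\tau$-dependent factor: the $\tau^2$ terms arise from the $d\tau^2$ piece of $\mathcal{K}_M(\tau,s)$, from the $d(\tfrac{u}{2}-\tau)^2$ piece of $\mathcal{K}_M(\tfrac{u}{2}-\tau,t)$, and from the $-2d\tau^2$ piece of the correlation kernel, and a one-line computation shows the coefficient of $\tau^2$ is $\frac{i}{2b}(d+d-2d)=0$.

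Since the $\tau$-dependence is then a pure linear exponential, the inner $\tau$-integral collapses to a Dirac delta, $\int_{\mathbb{R}}e^{i\lambda(s,t,u)\tau}\,d\tau=2\pi\,\delta(\lambda(s,t,u))$, where $\lambda$ is the accumulated linear coefficient; solving $\lambda=0$ forces $t$ to coincide with $s$ up to an affine rescaling, reducing the $(s,t)$-integral to a single integral. What then remains is bookkeeping: perform the change of variable $s\mapsto x$ that turns $f(s)g(t)$ into $f(2x)g(2x)$, combine the accumulated constants (the $\sqrt{i/(2\pi b)}$, the two copies of $\sqrt{1/(2\pi i b)}$, the factor $C$, and the $du_0^2$ phases), match the Jacobian of the rescaling against the $2\pi$ from the delta, and confirm that the surviving exponential reassembles precisely into $\mathcal{K}_M(u,x)\,\mathcal{T}(x)$ with $\mathcal{T}(x)=e^{-\frac{i}{2b}(6x(du_0-b\omega_0)+7dx^2)}$. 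This last reassembly of the phase — keeping track of all signs and of the $u_0,\omega_0$ offset terms — is the main obstacle; it is purely computational but error-prone. Once the integrand is identified as $\mathcal{K}_M(u,x)\,\mathcal{T}(x)f(2x)g(2x)$, the integral is by Definition \ref{def1} exactly $\mathcal{O}^{M}\big[\mathcal{T}(x)f(2x)g(2x)\big](u)$, which finishes the proof.
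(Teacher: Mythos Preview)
Your argument is correct in outline, but it takes a genuinely different route from the paper. The paper expands only \emph{one} of the two OLCTs, writing $F_M(v)=\int_{\mathbb R}\mathcal K_M(v,x)f(x)\,dx$ while keeping $G_M(\tfrac{u}{2}-v)$ intact; after the substitution $\omega=\tfrac{u}{2}-v$ the $\omega$-integral is recognised as (the conjugate of) the OLCT kernel acting on $G_M$, and the inversion formula recovers $g(x)$ directly, leaving a single integral in $x$ which is then rescaled by $x\mapsto 2x$. By contrast, you expand both $F_M$ and $G_M$, observe that the quadratic $\tau$-phase cancels (your key computation $d+d-2d=0$ is correct), and then invoke the distributional identity $\int_{\mathbb R}e^{i\lambda\tau}\,d\tau=2\pi\,\delta(\lambda)$ to collapse the triple integral. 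The two routes are essentially dual: the paper's use of the inversion formula and your Dirac delta are the same Fourier-inversion content in different clothing. The paper's version is arguably more elementary and closer to being rigorous as written for $L^2$ data, since it stays within ordinary integrals; your version is more symmetric and makes the cancellation mechanism explicit, but you should note that the delta step is formal and, if pressed, has to be justified either distributionally or by reinterpreting it as Fourier inversion. One small correction: solving $\lambda=0$ forces $t=s$ exactly (the linear coefficient works out to $\tfrac{i}{b}(t-s)$), not ``up to an affine rescaling''; the rescaling $s\mapsto 2x$ comes afterwards, just as in the paper.
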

	\begin{proof}
		By using the definition \ref{def3}, 
		\begin{eqnarray*}
			&&F_M\bigotimes G_M(u)\\&=&\hspace{-3mm}
			\sqrt{\frac{i}{2\pi b}}C \int_{\mathbb{R}} F_M(v) G_M(\frac{u}{2}-v) e^{\frac{i}{2b}[d(\frac{3u^2}{4}+uv-2v^2)-u(du_0-b\omega_0)]} dv		
			\\&=& \hspace{-3mm}\sqrt{\frac{1}{2\pi i b}} \sqrt{\frac{i}{2\pi b}}\int_{\mathbb{R}} \int_{\mathbb{R}}\hspace{-1.5mm} e^{\frac{i}{2b}(ax^2+2x(u_0-u)-2v(du_0-b\omega_0)-u(du_0-b\omega_0)+d(\frac{3u^2}{4}+v^2+uv-2v^2)}f(x) G_M(\frac{u}{2}-v) dv dx.
		\end{eqnarray*}
		Changing the variable $\frac{u}{2}-v = \omega$, we get
		\begin{eqnarray*}
			&&	F_M\bigotimes G_M(u)
			\\&=&\hspace{-3mm}\sqrt{\frac{1}{2\pi i b}}\sqrt{\frac{i}{2\pi b}}\int_{\mathbb{R}} \int_{\mathbb{R}} \big[e^{\frac{i}{2b}\{ax^2+ 2x(u_0-\frac{u}{2})+2x\omega -2u(du_0-b\omega_0)+2\omega(du_0-b\omega_0)+d(u^2-\omega^2)\}}f(x) G_M(\omega) \big] dx d\omega
			\\&=&\hspace{-3mm}\sqrt{\frac{1}{2\pi i b }}\sqrt{\frac{i}{2\pi b}}\int_{\mathbb{R}} \int_{\mathbb{R}}\hspace{-1mm} e^{\frac{-i}{2b}(ax^2+ 2x(u_0-\omega)-2\omega (du_0-b\omega_0))}G_M(\omega) e^{\frac{i}{2b}(2ax^2+2x(u_0-\frac{u}{2})+2x(u_0-\omega)+2x\omega -2u(du_0-b\omega_0)+du^2)}f(x)  dx d\omega
			\\&=&\hspace{-3mm}\sqrt{\frac{i}{2\pi i b}} \int_{\mathbb{R}} g(x)e^{\frac{i}{2b}(2ax^2+x(4u_0-u)-2u(du_0-b\omega_0)+du^2)}f(x)  dx
			\\&=&\hspace{-3mm}\sqrt{\frac{i}{2\pi i b}} \int_{\mathbb{R}} g(x)e^{\frac{i}{2b}(a(\frac{x}{2})^2+2\frac{x}{2}(u_0-u)-2u(du_0-b\omega_0)+du^2)}e^{\frac{i}{2b}(7a\frac{x^2}{4}+6\frac{x}{2}u_0)}f(x)  dx.
		\end{eqnarray*}
		Changing the variable $\frac{x}{2}$ to $x$, we get
		\begin{eqnarray*}
			(F_M \bigotimes G_M)(u)&=&\sqrt{\frac{1}{2\pi i b}} \int_{\mathbb{R}}  e^{\frac{i}{2b}(ax^2+2x(u_0-u)-2u(du_0-b\omega_0)+du^2)}e^{\frac{i}{2b}(7ax^2+6xu_0)}f(2x)g(2x) 2dx
			\\&=& \mathcal{O}^M \big[2e^{\frac{i}{2b}(7ax^2+6u_0x)} f(2x)g(2x)\big](u).
		\end{eqnarray*}
		This completes the theorem.
	\end{proof}
	
	The convolution and correlation are mathematical operations that applies on two functions to produce a third function. 
	\section{Spectral related theorems}	
	In this Section, we discuss analogous type theorems of real PW and Boas theorem for OLCT. To prove the real PW theorem, we will use the following lemma.
	
	\begin{lemma}\label{lem2}
		Let $f \in L^1 (\mathbb{R})$ be infinitely differentiable and $\mathcal{O}^Mf$ be its OLCT. Then 
		\begin{eqnarray}\label{4.1}
			(\mathcal{O}^M \Delta_x ^n f)(u) = (\frac{-i}{b}u)^n (\mathcal{O}^M f)(u),
		\end{eqnarray}
		where
		\begin{eqnarray}
			\Delta_x= -\big( \frac{\partial{}}{\partial{x}} + \frac{i}{b}(ax+u_0) \big)
		\end{eqnarray}
		and
		\begin{eqnarray}
			\Delta_x^n=(-1)^n \sum_{m=0}^{n} P_m (x) D^{n-m}_x, 
		\end{eqnarray}
		where
		\begin{eqnarray*}
			P_m(x)=\binom nx (\frac{i}{b})^m \sum_{k=0}^{m} \binom mk (ax)^k u_0^{m-k}. 
		\end{eqnarray*}
		Here $P_m$ denotes the $m^{th}$ order polynomial with $P_0 (x)=1$.
	\end{lemma}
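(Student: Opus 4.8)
The plan is to reduce the identity to the case $n=1$ and then iterate. The engine of the argument is a first-order differential identity for the OLCT kernel in the $x$-variable: differentiating the explicit expression for $\mathcal{K}_M(u,x)$ in Definition \ref{def1} gives
\begin{equation*}
\frac{\partial}{\partial x}\,\mathcal{K}_M(u,x)=\frac{i}{b}\bigl(ax+u_0-u\bigr)\,\mathcal{K}_M(u,x),
\end{equation*}
and hence, subtracting $\frac{i}{b}(ax+u_0)\mathcal{K}_M(u,x)$,
\begin{equation*}
\frac{\partial}{\partial x}\,\mathcal{K}_M(u,x)-\frac{i}{b}\bigl(ax+u_0\bigr)\mathcal{K}_M(u,x)=\frac{-i}{b}\,u\,\mathcal{K}_M(u,x).
\end{equation*}
Equivalently, writing $\phi(x)=\frac{i}{2b}(ax^2+2u_0x)$, one has $\Delta_x=-e^{-\phi(x)}\,\frac{d}{dx}\,e^{\phi(x)}$, i.e.\ $\Delta_x$ is the chirp-conjugated derivative; this reformulation is also convenient for the combinatorial part of the statement.

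For $n=1$ I would start from
\begin{equation*}
(\mathcal{O}^M\Delta_x f)(u)=-\int_{\mathbb{R}}\mathcal{K}_M(u,x)\Bigl(f'(x)+\frac{i}{b}(ax+u_0)f(x)\Bigr)\,dx
\end{equation*}
and integrate the $f'$-term by parts. The boundary contribution $[\mathcal{K}_M(u,x)f(x)]_{x=-\infty}^{x=+\infty}$ vanishes (since $|\mathcal{K}_M(u,\cdot)|$ is constant, only the decay of $f$ at $\pm\infty$ is used), leaving $\int_{\mathbb{R}}\bigl(\partial_x\mathcal{K}_M(u,x)-\frac{i}{b}(ax+u_0)\mathcal{K}_M(u,x)\bigr)f(x)\,dx$; by the kernel identity the bracket collapses to $\frac{-i}{b}u\,\mathcal{K}_M(u,x)$, whence $(\mathcal{O}^M\Delta_x f)(u)=\frac{-i}{b}u\,(\mathcal{O}^Mf)(u)$. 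The induction step is then immediate: assuming $(\mathcal{O}^M\Delta_x^{n-1}f)(u)=(\frac{-i}{b}u)^{n-1}(\mathcal{O}^Mf)(u)$, apply the $n=1$ identity to $\Delta_x^{n-1}f$ in place of $f$ to get $(\mathcal{O}^M\Delta_x^{n}f)(u)=\frac{-i}{b}u\,(\mathcal{O}^M\Delta_x^{n-1}f)(u)=(\frac{-i}{b}u)^{n}(\mathcal{O}^Mf)(u)$, which is \eqref{4.1}.

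Separately I would establish the closed form $\Delta_x^n=(-1)^n\sum_{m=0}^{n}P_m(x)D_x^{n-m}$ with the stated $P_m$ (using $\sum_{k=0}^m\binom{m}{k}(ax)^ku_0^{m-k}=(ax+u_0)^m$) by expanding $(-1)^n e^{-\phi(x)}\frac{d^n}{dx^n}e^{\phi(x)}$ via the Leibniz rule, together with an induction on $n$: since $\phi$ is quadratic, differentiating $e^{\phi}$ produces the factor $\phi'(x)=\frac{i}{b}(ax+u_0)$ at leading order, each coefficient of $D_x^{n-m}$ is a polynomial of degree at most $m$, and $P_0(x)=1$.

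The main obstacle is not conceptual but the control of hypotheses: for the integration by parts the boundary terms must genuinely vanish and $\mathcal{K}_M f'$, $x\mathcal{K}_M f$ must be integrable, while for the induction each $\Delta_x^k f$ with $k\le n$ must again be smooth and integrable — yet applying $\Delta_x$ multiplies $f$ by the unbounded factor $ax+u_0$, which can destroy $L^1$-membership. The natural remedy is to read the hypothesis ``$f\in L^1(\mathbb{R})$ infinitely differentiable'' in the strong sense that all $x^{j}f^{(\ell)}(x)$ (equivalently all $\Delta_x^k f$ for $k\le n$) lie in $L^1(\mathbb{R})$, a Schwartz-type decay condition, under which the induction above runs with no further comment.
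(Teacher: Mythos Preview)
Your argument is correct and follows essentially the same route as the paper: the paper introduces the formal adjoint $\Delta_x^{*}=\partial_x-\frac{i}{b}(ax+u_0)$, verifies your kernel identity in the eigenvalue form $\Delta_x^{*}\mathcal{K}_M=\frac{-i}{b}u\,\mathcal{K}_M$, iterates it to power $n$, and then invokes the adjoint relation $\int(\Delta_x^{*})^{n}\mathcal{K}_M\cdot f=\int\mathcal{K}_M\cdot\Delta_x^{n}f$ (which is exactly your integration by parts) to conclude. Your inductive organization and your explicit attention to the decay hypotheses needed for the boundary terms are in fact more careful than the paper's treatment, which simply asserts the adjoint relation without comment.
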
	
	
	\begin{proof}
		Let $\Delta_x^* = \frac{\partial{}}{\partial{x}} - i(ax +u_0)$.
		Then the kernel of OLCT satisfies the following
		\begin{eqnarray}
			(\Delta_x^*)\mathcal{K}_{M} f(u,x)= (\frac{-i}{b}u) \mathcal{K}_{M} (u,x). \label{4.4}
		\end{eqnarray}
		For $n \in \mathbb{N}$,
		\begin{eqnarray}
			(\Delta_x^*)^n\mathcal{O}^{M}(u,x)&=& (\frac{-i}{b}u)^n \mathcal{K}_M(u,x).\label{4.5} \\
			\int_{\mathbb{R}}(\Delta_x^*)^n\mathcal{K}_M(u,x) f(x)dx&=& \int_{\mathbb{R}}\mathcal{K}_M(u,x)(\Delta_x)^n f(x)dx.\label{4.6}
		\end{eqnarray}
		Using \eqref{4.4},\eqref{4.5} and \eqref{4.6}, we get 
		\begin{eqnarray*}
			(\mathcal{O}^M \Delta_x ^n f)(u) &=&
			\int_{\mathbb{R}} \mathcal{K}_M(u,x) \Delta_x^n f(x) dx \\&=& \int_{\mathbb{R}}[(\Delta_x^*)^n \mathcal{K}_M(u,x)] f(x) dx \\&=&
			\int_{\mathbb{R}}[(\frac{-i}{b}u)^n \mathcal{K}_M(u,x)] f(x) dx \\&=& (\frac{-i}{b}u)^n\int_{\mathbb{R}} \mathcal{K}_M(u,x) f(x) dx \\&=&
			(\frac{-i}{b}u)^n(\mathcal{O}^M f)(u).
		\end{eqnarray*}
		Hence we obtain the required result.
	\end{proof}

	\subsection{Real Paley-Weiner Theorem associated with OLCT}
	\begin{theorem}
		Let $f\in L^2 (\mathbb{R})$ be infinitely differentiable. Then $\big[-{\gamma}{|b|} , {\gamma}{|b|}\big]$ is the compact support of $(\mathcal{O}^M_ f) (u)$ if and only if $\Delta_t^n f \in L^2 (\mathbb{R})$ and $(\frac{u}{b})^n (\mathcal{O}^M f)(u) \in L^2 (\mathbb{R})$, for all $n \in \mathbb{Z}_+ $ with
		\begin{eqnarray}
			\lim_{n \to \infty} ||\Delta_t^n f||^{\frac{1}{n}}= \gamma,
		\end{eqnarray}
		where $\gamma= \sup \{|\frac{u}{b}|: \hspace{1mm} b \neq 0, \hspace{1mm} (\mathcal{O}^{M} f)(u) \neq 0, \hspace{1mm} u\in \mathbb{R} \}$.
	\end{theorem}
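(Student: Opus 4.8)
The plan is to transport the entire statement to the OLCT side, where it reduces to the elementary behaviour of $L^p$-norms against a finite measure as $p\to\infty$. First I would combine Lemma \ref{lem2} with the Parseval identity of Theorem \ref{nm}: for $f\in L^2(\mathbb R)$ infinitely differentiable this gives
\[
\|\Delta_t^n f\|_2=\|\mathcal{O}^M(\Delta_t^n f)\|_2=\Big\|\Big(\tfrac{-i}{b}u\Big)^n(\mathcal{O}^M f)(u)\Big\|_2=\Big\|\Big(\tfrac{u}{b}\Big)^n(\mathcal{O}^M f)(u)\Big\|_2,
\]
the middle equality being \eqref{4.1}. In particular $\Delta_t^n f\in L^2(\mathbb R)$ if and only if $(u/b)^n(\mathcal{O}^M f)(u)\in L^2(\mathbb R)$, and then the two $L^2$-norms coincide, so the two integrability hypotheses in the theorem are automatically equivalent.

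Next I would introduce the finite Borel measure $d\mu(u)=|(\mathcal{O}^M f)(u)|^2\,du$, whose total mass is $\mu(\mathbb R)=\|\mathcal{O}^M f\|_2^2=\|f\|_2^2<\infty$ by Theorem \ref{nm}, and write $\phi(u)=|u/b|$. The display above then reads $\|\Delta_t^n f\|_2^{1/n}=\|\phi\|_{L^{2n}(\mu)}$. Because $\mu$ is finite, the classical relation $\lim_{p\to\infty}\|\phi\|_{L^p(\mu)}=\|\phi\|_{L^\infty(\mu)}$ holds in $[0,\infty]$ (monotonicity of $p\mapsto\|\phi\|_{L^p(\mu)}$ on a finite measure plus a Chebyshev bound from below), and the $\mu$-essential supremum of $\phi$ equals $\sup\{|u/b|:(\mathcal{O}^M f)(u)\neq 0\}=\gamma$ since the support of $\mu$ and the essential support of $\mathcal{O}^M f$ agree up to null sets. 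Hence $\lim_{n\to\infty}\|\Delta_t^n f\|_2^{1/n}=\gamma$ whenever this limit exists in $[0,\infty]$.

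The two implications then follow quickly. If $[-\gamma|b|,\gamma|b|]$ is the compact support of $\mathcal{O}^M f$, then $\gamma<\infty$ and $\phi\le\gamma$ holds $\mu$-almost everywhere, so $\int\phi^{2n}\,d\mu\le\gamma^{2n}\|f\|_2^2<\infty$ for every $n$; thus $(u/b)^n\mathcal{O}^M f\in L^2(\mathbb R)$, hence $\Delta_t^n f\in L^2(\mathbb R)$ by the first step, and the limit formula gives $\lim_n\|\Delta_t^n f\|_2^{1/n}=\gamma$. Conversely, if $\Delta_t^n f\in L^2(\mathbb R)$ — equivalently $\phi\in L^{2n}(\mu)$ — for all $n$ and $\lim_n\|\Delta_t^n f\|_2^{1/n}=\gamma<\infty$, the limit formula forces $\|\phi\|_{L^\infty(\mu)}=\gamma$, whence $\mu(\{|u/b|>\gamma\})=0$, i.e.\ $(\mathcal{O}^M f)(u)=0$ for a.e.\ $|u|>\gamma|b|$, so $\operatorname{supp}(\mathcal{O}^M f)\subseteq[-\gamma|b|,\gamma|b|]$; moreover $\mu(\{|u/b|>\gamma-\varepsilon\})>0$ for every $\varepsilon>0$, so $\mathcal{O}^M f$ does not vanish almost everywhere near $\pm\gamma|b|$ and the support is exactly $[-\gamma|b|,\gamma|b|]$, in agreement with the definition of $\gamma$.

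The passage to the $L^\infty$-norm is routine, so the genuine obstacles are two rigour points. The first is that Lemma \ref{lem2} is phrased for $f\in L^1(\mathbb R)$, so for $f\in L^2(\mathbb R)\setminus L^1(\mathbb R)$ the first identity must be read through the unitary $L^2$-extension of the OLCT, checking via the inversion formula of Definition \ref{def1} that $\Delta_t^n f$ is indeed the inverse OLCT of $(-iu/b)^n(\mathcal{O}^M f)(u)$ before Theorem \ref{nm} can be applied. The second is the distinction between topological support and $\mu$-essential support of an $L^2$ function, which I would handle by working with the essential support throughout and only identifying it with the closed interval $[-\gamma|b|,\gamma|b|]$ at the very end.
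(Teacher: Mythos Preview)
Your proof is correct and follows essentially the same route as the paper: both use Theorem~\ref{nm} and Lemma~\ref{lem2} to rewrite $\|\Delta_t^n f\|_2^2=\int |u/b|^{2n}\,|(\mathcal{O}^M f)(u)|^2\,du$ and then extract the limit $\gamma$; the paper does the $\limsup\le\gamma$ and $\liminf\ge\gamma-2\epsilon$ estimates by hand, which is precisely the standard proof of the fact $\|\phi\|_{L^{2n}(\mu)}\to\|\phi\|_{L^\infty(\mu)}$ that you invoke for the finite measure $d\mu=|(\mathcal{O}^M f)|^2\,du$. Your two flagged rigour points---extending Lemma~\ref{lem2} from $L^1$ to $L^2$ and distinguishing essential from topological support---are genuine and are glossed over in the paper's argument.
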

	\begin{proof}
		(Necessary Condition)
		Suppose that  $\big[-{\gamma}{|b|} , {\gamma}{|b|}\big]$ is the compact support of $(\mathcal{O}^M_ f) (u)$ and  $f\in L^2 (\mathbb{R})$.
		\\By Lemma \ref{nm} and \ref{lem2}, we have
		\begin{eqnarray*}
			||\Delta_x ^n f||_2^2 
			&=& \int_{\mathbb{R}} |(\frac{-i}{b}u)^n (\mathcal{O}^{M} f)(u)|^2 du \\&=& \int_{-{\gamma}{|b|}}^{{\gamma}{|b|}} |(\frac{-i}{b}u)|^{2n} |(\mathcal{O}^{M}f)(u)|^2 du \\ &\leq& \sup_{u\in [{\gamma}{|b|},{\gamma}{|b|}]} |\frac{i}{b}u|^{2n} \int_{{\gamma}{|b|}}^{{\gamma}{|b|}} |(\mathcal{O}^{M}f)(u)|^2 du \\&=& \gamma^{2n} ||\mathcal{O}^{M}f||^2_2,
		\end{eqnarray*}
		which leads to
		\begin{eqnarray*}
			||\Delta_x ^n f||^{\frac{1}{n}} \leq \gamma ~ ||\mathcal{O}^{M }f||^{\frac{1}{n}}.
		\end{eqnarray*}
		Hence
		\begin{eqnarray}
			\lim_{n \to \infty} \sup ||\Delta_x^n f||^{\frac{1}{n}} &\leq& \gamma.\label{rp2}
		\end{eqnarray}
		Now, let $\epsilon \in (0,\frac{\gamma}{2})$. Then
		\begin{eqnarray*}
			||\Delta_x ^n f||^2 &=&\int_{{-\gamma}{|b|}}^{{\gamma}{|b|}} |(\frac{i}{b}u)^{n} (\mathcal{O}^{M}f)(u)|^2 du
			\\&\geq& \int_{{-(\gamma-2\epsilon)}{|b|}}^{{\gamma-\epsilon}{|b|}}|(\frac{i}{b}u)^{n} (\mathcal{O}^{M}f)(\omega)|^2 du
			\\&\geq&  \inf_{u\in [{-(\gamma-2\epsilon)}{|b|},{\gamma-\epsilon}{|b|}]} |\frac{i}{b}u|^{2n} \int_{\frac{-(\gamma-2\epsilon)}{|b|}}^{\frac{\gamma-\epsilon}{|b|}} |(\mathcal{O}^{M}f)(u)|^2 du 
			\\&=& (\gamma-2\epsilon)^{2n}\int_{-{(\gamma-2\epsilon)}{|b|}}^{\gamma-\epsilon{|b|}} |(\mathcal{O}^{M}f)(u)|^2 du \\&\geq& (\gamma-2\epsilon)^{2n}||\mathcal{O}^{M}f||^2_2 .
		\end{eqnarray*}
		i.e.,
		\begin{eqnarray*} 
			\lim_{n \to \infty} \inf ||\Delta_x ^n f||^{\frac{1}{n}} &\geq& (\gamma-2\epsilon), \hspace{2mm}\forall \epsilon \in (0, \frac{\gamma}{2}).
		\end{eqnarray*}
		Since $\epsilon > 0 $ is arbitrary,
		\begin{eqnarray}	
			\lim_{n \to \infty} \inf ||\Delta_x ^n f||^{\frac{1}{n}} &\geq& \gamma. \label{rp3}
		\end{eqnarray}
		Therefore, from \ref{rp2} and \ref{rp3}, we conclude that
		\begin{eqnarray*}
			\lim_{n \to \infty}||\Delta_x ^n f||^{\frac{1}{n}} &=& \gamma.
		\end{eqnarray*}
		\\(Sufficient Condition): Suppose the derivative operator $\Delta_x^n f \in L^2 (\mathbb{R})$ and $(\frac{u}{b})^n (\mathcal{O}^{M} f)(u) \in L^2 (\mathbb{R})$, for all $n \in \mathbb{Z}_+ $ with
		\begin{eqnarray*}
			\lim_{n \to \infty} ||\Delta_x^n f||^{\frac{1}{n}}= \gamma < \infty,
		\end{eqnarray*}
		where 
		\begin{eqnarray*}
			\gamma= \sup \{|\frac{u}{b}|,\hspace{1mm} b \neq 0 ;\hspace{2mm} (\mathcal{O}^{M} f)(u) \neq 0, \hspace{2mm} u \in \mathbb{R} \} < \infty.
		\end{eqnarray*}
		This gives 
		\begin{eqnarray*}
			|u| \leq {\gamma}{|b|}, \hspace{3mm} \forall u\in \mathbb{R} \hspace{3mm} with \hspace{3mm} (\mathcal{O}^{M} f)(u) \neq 0.
		\end{eqnarray*}
		Hence $\big[-{\gamma}{|b|} , {\gamma}{|b|}\big]$ is the compact support of $(\mathcal{O}^M_ f) (u)$.
	\end{proof}
	
	\subsection{Boas theorem associated with OLCT}
	\begin{theorem}
		Let the function $f \in L^1(\mathbb{R})$. Then the OLCT of $f$ vanishes in the neighborhood of the origin if and only if $B^n f \in L^1 (\mathbb{R})$ is well-defined %and belongs to $L^1(\mathbb{R})$ 
		,$\forall n \in \mathbb{Z}_+$ and \begin{eqnarray}
			\lim_{n \to \infty} ||B^nf||^\frac{1}{n}= R < \infty,
		\end{eqnarray}
		where
		\begin{eqnarray*}
			(Bf)(x)&=&e^{\frac{-i}{2b}(ax^2+2xu_0)}\int_{x}^{\infty} e^{\frac{i}{2b} i(at^2+2tu_0)}f(t)dt,
			\\R=\gamma^{-1} \hspace{3mm} and \hspace{3mm}
			\gamma&=&\{\inf{|\frac{1}{b}u|: u\in\sup\mathcal{O}^{M} f}\}.
		\end{eqnarray*}
	\end{theorem}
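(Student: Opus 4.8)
The plan is to recognise the operator $B$ as a right inverse of the operator $\Delta_x$ of Lemma \ref{lem2} and then to transfer the statement to the OLCT side, exactly as in the real Paley--Wiener theorem just proved. First I would establish the pointwise identity $\Delta_x(Bf)=f$ for $f\in L^1(\mathbb{R})$: with $h(x)=e^{\frac{i}{2b}(ax^2+2xu_0)}$ one has $h'(x)=\frac{i}{b}(ax+u_0)h(x)$, so differentiating $(Bf)(x)=h(x)^{-1}\int_x^\infty h(t)f(t)\,dt$ gives $(Bf)'(x)+\frac{i}{b}(ax+u_0)(Bf)(x)=-f(x)$, i.e.\ $\Delta_x(Bf)=f$; iterating yields $\Delta_x^{n}(B^nf)=f$. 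Feeding this into (a finite-order analogue of) Lemma \ref{lem2} applied to $B^nf$ would give the master relation
\[
(\mathcal{O}^M f)(u)=\Bigl(\tfrac{-i}{b}u\Bigr)^{n}(\mathcal{O}^M B^n f)(u),\qquad\text{i.e.}\qquad (\mathcal{O}^M B^n f)(u)=\bigl(b/(-iu)\bigr)^{n}(\mathcal{O}^M f)(u),
\]
the second form being meaningful precisely on $\operatorname{supp}\mathcal{O}^M f$, where $|u/b|\ge\gamma$.

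For the direction $(\Rightarrow)$ I would assume $\mathcal{O}^M f$ vanishes on a neighbourhood of the origin, so $\gamma>0$ and $|b/u|^n\le\gamma^{-n}=R^n$ on $\operatorname{supp}\mathcal{O}^M f$. Using this bound, the identification of $\mathcal{O}^M B^nf$ above, the inversion formula and Theorem \ref{nm}, I would check that each $B^nf$ is well defined and lies in $L^1(\mathbb{R})\cap L^2(\mathbb{R})$. Then, copying the Paley--Wiener estimate,
\[
\|B^nf\|^2=\|\mathcal{O}^M B^nf\|^2=\int_{\operatorname{supp}\mathcal{O}^M f}\Bigl|\tfrac{b}{u}\Bigr|^{2n}|(\mathcal{O}^M f)(u)|^2\,du\le R^{2n}\|f\|^2,
\]
so $\limsup_n\|B^nf\|^{1/n}\le R$. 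For the matching lower bound I would fix $\varepsilon\in(0,\gamma)$; the set $E_\varepsilon=\{u\in\operatorname{supp}\mathcal{O}^M f:|u/b|<\gamma+\varepsilon\}$ satisfies $\int_{E_\varepsilon}|(\mathcal{O}^M f)(u)|^2\,du>0$, whence $\|B^nf\|^2\ge(\gamma+\varepsilon)^{-2n}\int_{E_\varepsilon}|(\mathcal{O}^M f)(u)|^2\,du$ and $\liminf_n\|B^nf\|^{1/n}\ge(\gamma+\varepsilon)^{-1}$; letting $\varepsilon\downarrow0$ gives $\liminf_n\|B^nf\|^{1/n}\ge\gamma^{-1}=R$, so the limit exists and equals $R$.

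For $(\Leftarrow)$ I would suppose $B^nf\in L^1(\mathbb{R})$ is well defined for all $n$ and $\lim_n\|B^nf\|^{1/n}=R<\infty$. From the master relation and the kernel estimate $|(\mathcal{O}^M g)(u)|\le\frac{1}{\sqrt{2\pi|b|}}\|g\|_1$ (cf.\ Lemma \ref{lem1}), $|(\mathcal{O}^M f)(u)|=|u/b|^{n}|(\mathcal{O}^M B^nf)(u)|\le|u/b|^{n}\tfrac{1}{\sqrt{2\pi|b|}}\|B^nf\|_1$ for every $n$; taking $n$-th roots and letting $n\to\infty$ would give $\limsup_n|(\mathcal{O}^M f)(u)|^{1/n}\le|u/b|R$. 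When $|u/b|<R^{-1}$ the right side is $<1$, which is impossible unless $(\mathcal{O}^M f)(u)=0$; hence $\mathcal{O}^M f$ vanishes on $\{u:|u|<|b|/R\}$, a neighbourhood of the origin. In particular $\gamma\ge R^{-1}$, and combining with the direction $(\Rightarrow)$, now applicable, pins down $R=\gamma^{-1}$.

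The step I expect to cost the most care is the clause ``$B^nf$ is well defined in $L^1$'': one must verify that the iterated improper integrals converge absolutely and — the delicate point — that Lemma \ref{lem2}, stated for $C^\infty$ functions, may be applied to $B^nf$, i.e.\ that the $n$ integrations by parts of $\mathcal{K}_M$ against $B^nf$ produce no boundary terms at $\pm\infty$. Here the hypotheses should do the work: $Bf$ of an $L^1$ function is absolutely continuous, so enough regularity is present, and $B^nf\in L^1(\mathbb{R})$ actually forces $(\mathcal{O}^M B^kf)(0)=0$ for all $k\ge0$ — otherwise $B^{k+1}f$ would tend to a nonzero constant at $-\infty$ and could not be integrable — which is exactly what kills the boundary contributions. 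In the direction $(\Rightarrow)$ the bounded multiplier $(b/u)^n$ on $\operatorname{supp}\mathcal{O}^M f$ together with Theorem \ref{nm} supplies both the $L^1\cap L^2$ membership and the decay. (Throughout one works, as in Definition \ref{def1}, with $f\in L^1(\mathbb{R})\cap L^2(\mathbb{R})$, so that Theorem \ref{nm} is available.) Once this bookkeeping is settled, the two implications interlock exactly as in the Paley--Wiener theorem above.
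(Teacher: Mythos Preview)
Your proposal is correct and rests on the same master relation $(\mathcal{O}^{M}B^nf)(u)=(b/iu)^n(\mathcal{O}^{M}f)(u)$ that the paper uses, but you reach and exploit it differently in two places.

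First, in the forward direction the paper does \emph{not} derive the master relation via the differential identity $\Delta_x(Bf)=f$. Instead it truncates $B$ to $B_Nf(x)=h(x)^{-1}\int_x^N h(t)f(t)\,dt$, writes this as an inner product $\langle f,\,e^{-\frac{i}{2b}(at^2+2tu_0)}\chi_N^x\rangle$, passes to the OLCT side by Theorem~\ref{nm}, evaluates $\mathcal{O}^{M}$ of the chirp-modulated indicator explicitly, and lets $N\to\infty$ with the Riemann--Lebesgue lemma killing the boundary term; this yields $(Bf)(x)=\mathcal{O}^{M^{-1}}\bigl[(b/iu)\,\mathcal{O}^{M}f\bigr](x)$ directly, whence the master relation by induction. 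The advantage of the paper's route is that it never invokes Lemma~\ref{lem2} on $B^nf$ in the $(\Rightarrow)$ half, so the regularity issue you flag does not arise there; the price is a longer computation. Your ODE route is shorter and more conceptual but, as you note, pushes the integration-by-parts justification into the verification that Lemma~\ref{lem2} applies to $B^nf$.

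Second, for $(\Leftarrow)$ the paper argues by contradiction in $L^2$: assuming $\mathcal{O}^{M}f$ does not vanish near $0$, it uses $(\mathcal{O}^{M}B^nf)(u)=(\mathcal{O}^{M}f)(u)/(-iu/b)^n$ and Parseval on $G_0=\{0<|u/b|<\epsilon\}$ to get $\|B^nf\|\ge\epsilon^{-n}\bigl(\int_{G_0}|\mathcal{O}^{M}f|^2\bigr)^{1/2}$, forcing $\lim_n\|B^nf\|^{1/n}=\infty$. Your pointwise argument via $|(\mathcal{O}^{M}f)(u)|\le|u/b|^n(2\pi|b|)^{-1/2}\|B^nf\|_1$ is a genuinely different and cleaner alternative that aligns better with the $L^1$ hypothesis in the statement and avoids the detour through a contradiction.
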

	
	\begin{proof}(Necessary condition)
		Suppose	$(\mathcal{O}^{M} f)(u)$ vanishes on $({\gamma}{|b|},{\gamma}{|b|})$ such that
		\begin{eqnarray}
			(B_Nf)(x)&=&e^{\frac{-i}{2b}(ax^2+2xu_0)}\int_{x}^{N} e^{\frac{-i}{2b}(at^2+2tu_0)}f(t)dt.
		\end{eqnarray}
		Obviously
		\begin{eqnarray}
			\lim_{N \to \infty} (B^Nf)(x)&=& (Bf)(x). 
		\end{eqnarray}
		The characteristic function  $\chi_N^x$ is defined by
		\begin{eqnarray*}
			\chi_{N}^x(t)=
			\begin{cases}
				1 & , \hspace{3mm} \text{if }  t \in [x,N]\\
				0 & ,\hspace{3mm} \text{otherwise }. 
			\end{cases}
		\end{eqnarray*}
		Now, using \ref{def1} and \ref{nm}, we get
		\begin{eqnarray*}
			&&	(B_Nf)(x)
			\\&=& e^{\frac{-i}{2b}(ax^2+2xu_0)}\big< e^{\frac{i}{2b}(at^2+2tu_0)}f(t), \chi_{N}^x(t) \big>
			\\&=& e^{\frac{-i}{2b}(ax^2+2xu_0)}\big< f(t), e^{\frac{-i}{2b}(at^2+2tu_0)}\chi_{N}^x(t) \big>		
			\\&=& e^{\frac{-i}{2b}(ax^2+2xu_0)}\big< (\mathcal{O}^{M} f)(u), (\mathcal{O}^{M} e^{\frac{-i}{2b}(at^2+2tu_0)}\chi_{N}^x)(u) \big>
			\\&=& e^{\frac{-i}{2b}(ax^2+2xu_0)}\int_{\mathbb{R}} (\mathcal{O}^{M} f)(u) \overline{(\mathcal{O}^{M} e^{-i(at^2+2xu_0)}\chi_{N}^x)(u)} du
			\\&=& e^{\frac{-i}{2b}(ax^2+2xu_0)}\int_{\mathbb{R}} (\mathcal{O}^{M} f)(u) {\big(\overline {\int_{x}^{N} \mathcal{K}_{M}(u,t) e^{\frac{-i}{2b}(at^2+2tu_0)}dt} }\big) du		
			\\&=&\overline{\sqrt{\frac{1}{2\pi i b}}} e^{\frac{-i}{2b}(ax^2+2xu_0)}\int_{\mathbb{R}}\big[ (\mathcal{O}^{M} f)(u)\\&& \times \overline{\big( \int_{x}^{N} e^{\frac{i}{2b}(at^2+2t(u_0-u)-2u(du_0-b\omega_0)+du^2)}e^{\frac{i}{2b}(at^2+2tu_0)}dt \big)\big]} du	
	\\&=&\overline{\sqrt{\frac{1}{2\pi i b }}} e^{\frac{-i}{2b}(ax^2+2xu_0)}\int_{\mathbb{R}} (\mathcal{O}^{M} f)(u) {\big( \int_{x}^{N}}\overline{ e^{\frac{i}{2b}(-2tu-2u(du_0-b\omega_0)+du^2+du_0^2)}dt} \big) du
	\\&=&\overline{\sqrt{\frac{1}{2\pi i b }}} e^{\frac{-i}{2b}(ax^2+2xu_0)}\int_{\mathbb{R}} (\mathcal{O}^{M} f)(u)\overline{ e^{\frac{i}{2b}(-2u(du_0-b\omega_0)+du^2+du_0^2)} \big( \frac{e^{\frac{-i}{2b}(2tu)}-e^{\frac{-i}{2b}(2xu)}}{\frac{-i}{2b}2u} }\big) du	
\\&=&\overline{\sqrt{\frac{1}{2\pi i b}}} e^{\frac{-i}{2b}(ax^2+2xu_0)}\int_{\mathbb{R}}\frac{b}{iu} (\mathcal{O}^{M} f)(u)e^{\frac{-i}{2b}(-2u(du_0-b\omega_0)+du^2+du_0^2)} e^{\frac{-i}{2b}2xu}du	\\&~~~-&\overline{\sqrt{\frac{1}{2\pi i b}}} e^{\frac{-i}{2b}(ax^2+2xu_0)}\int_{\mathbb{R}}\frac{b}{iu} (\mathcal{O}^{M} f)(u)e^{\frac{-i}{2b}(-2u(du_0-b\omega_0)+du^2+du_0^2)} e^{\frac{-i}{2b}2Nu} du	
			\\&=&\overline{\sqrt{\frac{1}{2\pi i b}}} \int_{\mathbb{R}}\frac{b}{iu} (\mathcal{O}^{M} f)(u) e^{\frac{-i}{2b}(ax^2+2xu_0-2u(du_0-b\omega_0)+du^2+du_0^2)}du	\\&~~~-&\overline{\sqrt{\frac{1}{2\pi i b}}} e^{\frac{-i}{2b}(ax^2+2xu_0)}e^{\frac{i}{2b}(aN^2+2Nu_0)}\int_{\mathbb{R}}\frac{b}{iu} (\mathcal{O}^{M} f)(\omega)e^{\frac{-i}{2b}(aN^2 +2N(u_0-u) -2u(du_0-b\omega_0)+du^2+du_0^2)} du
			\\&=& \int_{\mathbb{R}} \frac{b}{iu}(\mathcal{O}^{M} f)(u) \overline {\mathcal{K}_{M }(u,x)} du
			\\&-& e^{\frac{-i}{2b}(ax^2+2xu_0-aN^2-2Nu_0)}\int_{\mathbb{R}} \frac{b}{iu}(\mathcal{O}^{M} f)(u) \overline {\mathcal{K}_{M }(u,N)} du.
		\end{eqnarray*}
		As $N \to \infty$, by lemma \ref{lem1}, we have 
		\begin{eqnarray*}
			\int_{\mathbb{R}} (\mathcal{O}^{M} f)(u) \overline {\mathcal{K}_{M}(u,N)} du &\to& 0.
		\end{eqnarray*}
		Thus, we get 		
		\begin{eqnarray*}
			\lim_{N\to \infty} (B_N f)(x)&=& \int_{\mathbb{R}} \frac{b}{iu}(\mathcal{O}^{M} f)(u) \overline {\mathcal{K}_{M}(u,N)} du \\&=& \mathcal{O}^{M^{-1}} \big(\frac{b}{iu} \mathcal{O}^{M} f\big)(x).
		\end{eqnarray*}
		Therefore
		\begin{eqnarray*}
			(B f)(x)=\mathcal{O}^{M^{-1}} \big(\frac{b}{iu} \mathcal{O}^{M} f\big)(x).
		\end{eqnarray*}	
		Applying OLCT on both sides gives
		\begin{eqnarray}
			\mathcal{O}^{M}(B f)(u)= \frac{b}{iu} \big(\mathcal{O}^{M} f\big)(u),
		\end{eqnarray}	
		which leads to
		\begin{eqnarray*}	
			\mathcal{O}^{M}(B^2 f)(u)= \big(\frac{b}{iu}\big)^2 \big(\mathcal{O}^{M} f\big)(u).
		\end{eqnarray*}
		Using Mathematical Induction, we get
		\begin{eqnarray}\label{4.14}
			\mathcal{O}^{M}(B^n f)(u)= \big(\frac{b}{iu}\big)^n \big(\mathcal{O}^{M} f\big)(u).		
		\end{eqnarray}
		Combining \ref{nm} and \eqref{4.14}, we get
		\begin{eqnarray*}
			||B^n f||^2_2 %&=& ||\mathcal{Q}^{a, b, c}_{d, e} B^n f||^2_2\\ 
			&=& \int_{\mathbb{R}}|(\mathcal{O}^{M} B^n f) (u)|^2 du
			%\\&=&\int_{\mathbb{R}}|\big(\frac{1}{ib\omega}\big)^n \big(\mathcal{Q}^{a, b, c}_{d, e}  f\big)(\omega)|^2d\omega	
			\\&=&\int_{\mathbb{R}}|\big(\frac{b}{u}\big)^n \big(\mathcal{O}^{M} f\big)(u)|^2du	
		\end{eqnarray*}
		Let $E= \mathbb{R} \setminus \big(-{\gamma}{|b|},{\gamma}{|b|} \big)$. Then $\mathcal{O}^{M} f$ vanishes outside of $E$.
		\begin{eqnarray*}
			||B^n f||^2_2 &=& \int_{\mathbb{R}}|\big(\frac{b}{u}\big)^n \big(\mathcal{O}^{M}  f\big)(u)|^2du
			\\&\leq&\frac{1}{\gamma^{2n}}\int_{E}|(\mathcal{O}^{M} f)(u)|^2 du
			\\&=& \frac{1}{\gamma^{2n}}||\mathcal{O}^{M} f||^2,			
		\end{eqnarray*}
		which gives
		\begin{eqnarray}\label{4.15}
			\lim_{n\to \infty} \sup ||B^n f (x)||^{\frac{1}{n}} &\leq& \frac{1}{\gamma} .		
		\end{eqnarray}
		For obtaining the reverse inequality, let $G=\{ u\in \mathbb{R}: \gamma< |\frac{b}{u}|<\gamma+\epsilon\}$. Then
		\begin{eqnarray}\label{4.16}
			||B^n f(x)||^2 &=& \int_{\mathbb{R}}|\big(\frac{b}{u}\big)^n \big(\mathcal{O}^{M}  f\big)(u)|^2du \nonumber
			\\&\geq& \int_{G}|\big(\frac{b}{u}\big)^n \big(\mathcal{O}^{M} f\big)(u)|^2du	\nonumber
			\\ &\geq& (\gamma+\epsilon)^{-2n}\int_{G}|\big(\mathcal{O}^{M}  f\big)(u)|^2du. \nonumber
		\end{eqnarray}	
		Thus, we get
		\begin{eqnarray*}
			\lim_{n\to \infty} \inf||B^n f||^{\frac{1}{n}}&\geq& (\gamma+\epsilon)^{-1},
		\end{eqnarray*}	
		which leads to
		\begin{eqnarray}
			\lim_{n\to \infty} \inf||B^n f||^{\frac{1}{n}}&\geq& \frac{1}{\gamma}.
		\end{eqnarray}
		Combine \ref{4.15} and \ref{4.16} to get
		\begin{eqnarray*}
			\lim_{n\to \infty} ||B^n f||^{\frac{1}{n}}&=& \frac{1}{\gamma}= R.			 
		\end{eqnarray*}
		(Sufficient condition) Suppose $B^n f \in L^2 (\mathbb{R})$ is well-defined, %and belongs to  $L^2(\mathbb{R})$, 
		$\forall n$ and
		\begin{eqnarray*}
			\lim_{n\to \infty} ||B^n f||^{\frac{1}{n}}=R <\infty,
		\end{eqnarray*}
		where 
		\begin{eqnarray*}
			(Bf)(x) =e^{\frac{-i}{2b}(ax^2+2xu_0)}\int_{x}^{\infty} e^{\frac{i}{2b}(ax^2+2xu_0)}f(x)dx.
		\end{eqnarray*}
		Suppose $\mathcal{O}^{M} f$ does not vanish in any neighborhood of origin.
		\\Calculating in similar way, we get
		\begin{eqnarray*}
			(\Delta_x)^n B^n f (x) = f(x).
		\end{eqnarray*}
		Applying OLCT on both sides, we obtain
		\begin{eqnarray}\label{4.17}
			\mathcal{O}^{M}(\Delta_x^n B^n f(x))(u)=(\mathcal{O}^{M} f)(u).
		\end{eqnarray}
		Replacing $f$ by $B^n f$ in equation \eqref{4.1} gives
		\begin{eqnarray}\label{4.18}
			\mathcal{O}^{M}(\Delta_x^n B^n f(x))(u)&=& (\frac{-i}{b}u)^n(\mathcal{O}^{M} B^n f(x))(u).
		\end{eqnarray}	
		Combining \eqref{4.17} and \eqref{4.18} leads to 
		\begin{eqnarray*}
			(\frac{-i}{b}u)^n(\mathcal{O}^{M} B^n f(x))(u)&=&(\mathcal{O}^{M} f)(u).
		\end{eqnarray*}	
		Hence
		\begin{eqnarray}\label{4.19}
			(\mathcal{O}^{M} B^n f(x))(u)&=& \frac{(\mathcal{O}^{M} f)(u)}{(\frac{-i}{b}u)^n} .
		\end{eqnarray}
		Let $G_0 = \{ u \in \mathbb{R}: 0 < |\frac{u}{b}| <\epsilon \}$. Then using \ref{nm} and \eqref{4.19}, we get
		\begin{eqnarray*}
			||B^n f||^2 %&=& ||\mathcal{Q}^{a, b, c}_{d, e} B^n f||^2\\
			&=&\int_{\mathbb{R}}\frac{|\mathcal{O}^{M} f(u)|^2}{|\frac{-i}{b}u|^{2n}} du
			\\&\geq& \int_{G_0}\frac{|\mathcal{O}^{M} f(u)|^2}{|\frac{u}{b}|^{2n}} du
			\\&\geq& \frac{1}{\epsilon^{2n}} \int_{G_0} |\mathcal{O}^{M} f(u)|^2 du,
		\end{eqnarray*}
		which gives
		\begin{eqnarray*}
			||B^n f||^{\frac{1}{n}} &\geq& \frac{1}{\epsilon} \big( \int_{G_0} |\mathcal{O}^{M} f(u)|^2 du \big)^{\frac{1}{2n}}.
		\end{eqnarray*}	
		Thus, we get
		\begin{eqnarray*}
			\lim_{n \to \infty}||B^n f||^{\frac{1}{n}} &\geq& \frac{1}{\epsilon}.
		\end{eqnarray*}
		As $\epsilon \to 0$, we obtain
		\begin{eqnarray*}
			\lim_{n\to \infty}||B^n f||^{\frac{1}{n}} = \infty.
		\end{eqnarray*}
		This contradicts our assumption.
		Hence OLCT $(\mathcal{O}^{M} f)$ of $f$ vanishes in the neighborhood of origin.
	\end{proof}

	%%%%%%%%%%%%%%%%%%%%%%%%%%%%%%%%%%%%%%SECTION5	

	\section{Potential Applications}
	In this section, we discuss filter design based on convolution and product structures associated with the OLCT. Several studies have explored the application of multiplicative filters designed using the FrFT or LCT to eliminate noise or distortion. Using convolution, we examine the design approaches and performance of multiplicative filters created with the OLCT. Additionally, we demonstrate that filtering is more easily obtained by convolution in the time domain. A filtering model is presented, enabling the design of low-pass, band-pass, and high-pass multiplicative filters. The impact of the multiplicative filter is expressed by the following equation.
	\begin{eqnarray*}
		r_{out}(x)
		&=&\mathcal{O}^{M^{-1}}\big[\mathcal{O}^M (f_{in}\bigoplus g)\big]
	\\&=&\mathcal{O}^{M^{-1}}\big[\mathcal{O}^M(f_{in})2\mathcal{T}(u){G}_M(2u)\big](x) 
	\end{eqnarray*} 
$where\hspace{3mm}  \tilde{G}_M(2u)=2\mathcal{T}(u){G}_M(2u)$.

$\mathcal{O}^M(f_{in})$ represents the OLCT of the received signal $f_{in}(t)$. The filtering model is presented, and we can design low-pass, band-pass, and high-pass multiplicative filter models using $\tilde{G}_A(2u)$.
 
 In practical applications, when the received signal $f_{in}(t)$ consists of two components the desired signal $f(t)$ and noise $n(t)$ a multiplicative filter in the OLCT domain can effectively preserve the desired signal and significantly reduce the noise, thereby enhancing the signal-to-noise ratio (SNR).    
 
In \cite{pw9,pw10}the Paley–Wiener theorem as a characterization that relates the support of a function to its growth, providing an understanding of the image of a space of functions or distributions under a Fourier-type transform. In this context, we aim to extend the classical Fourier transform concept into the OLCT domain, as OLCT serves as a generalized version of the FT and is a valuable tool in optics and signal processing. This method can be visualized in figure \ref{fig1}.
	\begin{figure}[t!]
		\centering
		\begin{tikzpicture}[node distance=2cm]
			\tiny	
			\hspace{-8mm}\tikzstyle{input} = [minimum width=0.6cm, minimum height=0.75cm, text centered, text width=0.6cm, fill=white]
			\tikzstyle{process} = [draw, rectangle, minimum width=3cm, minimum height=0.75cm, text centered, text width=2.8cm, fill=white]
			\tikzstyle{modulator} = [minimum size=0.05cm, text centered, text width=0.05cm, fill=white]
			\tikzstyle{output} = [draw, rectangle, minimum width=6cm, minimum height=0.75cm, text centered, text width=7.5cm, fill=white]
			
			\node (input1) [input] {\tiny$f_{in}(x)$};
			\node (process1) [process, right of=input1, xshift=0.5cm] {\tiny$\mathcal{O}^M (f_{in})(u),$\\ $u \in [u_1,u_2]$};
			\node (process2) [process, right of=process1, xshift=1.5cm] {\tiny$\mathcal{O}^M (f_{in})(2u),$\\$ u \in [\frac{u_1}{2},\frac{u_2}{2}]$};
			\node (modulator) [modulator, right of=process2, xshift=0.5cm] {$\hspace{-1mm}\bigoplus$};
			\node (output) [output, below of=process2, xshift=2.5cm] {\tiny{$\mathcal{O}^{M^{-1}}\big[\mathcal{O}^M(f_{in})2\mathcal{T}(u){G}_M(2u)\big](x)$}};
			\node (process3) [process, right of=modulator, xshift=0.3cm] {\tiny$e^{\frac{-i}{2b}(6u(du_0-b\omega_0)+7du^2)}{G}(2u)$};
			\node (additionalOutput) [right of=output, xshift=-2cm, yshift=-1.5cm] {\tiny$f_{out}(x)$};

			\draw [->] (input1) -- (process1);
			\draw [->] (process1) -- (process2);
			\draw [->] (process2) -- (modulator);
			\draw [->] (modulator) -- (output);
			\draw [->] (output) -- (additionalOutput);
			\draw [->, shorten >= 1mm] (process3) -- (modulator);
		\end{tikzpicture}
		\caption{}
		\label{fig1}
	\end{figure}

	%%%%%%%%%%%%%%%%%%%%%%%%%%%%%%%%%%%%%%SECTION6

	\section{Conclusion}
In this article, we introduce a new type of convolution and correlation theorems associated with the OLCT. Additionally, we discuss their applications in multiplicative filter design, which may prove beneficial in optics and signal processing for signal recovery. Furthermore, we explore the real Paley-Wiener (PW) and Boas theorems for the OLCT, analyzing the characteristics of signals in the OLCT within the $L^2$ domain. These studies can be further extended to $L^p$ spaces and Schwartz-type function spaces.
	
	\section*{Funding statement} 
	This research received no specific grant from any funding agency in the public, commercial, or not-for-profit sectors.
	\section*{Declaration of competing interest}
	The authors declare that they have no known competing financial interests or personal relationships that could have appeared to influence the work reported in this paper.
	\\\\

	\bibliographystyle{amsplain}

\end{document}